\theoremstyle{plain}
\newtheorem{prop}{Proposition}
\newtheorem{thm}[prop]{Theorem}
\newtheorem{lemma}[prop]{Lemma}
\title[Continuous-time double averages]{Pointwise convergence of certain continuous-time double ergodic averages}
\author[M. Christ]{Michael Christ}
\address{Michael Christ\\
        Department of Mathematics\\
        University of California \\
        Berkeley, CA 94720, USA}
\email{mchrist@berkeley.edu}
\author[P. Durcik]{Polona Durcik}
\address{Polona Durcik\\
        Schmid College of Science and Technology\\
        Chapman University\\
        One University Drive\\
        Orange, CA 92866, USA}
\email{durcik@chapman.edu}
\author[V. Kova\v{c}]{Vjekoslav Kova\v{c}}
\address{Vjekoslav Kova\v{c}\\
        Department of Mathematics, Faculty of Science\\
        University of Zagreb\\
        10000 Zagreb, Croatia}
\email{vjekovac@math.hr}
\author[J. Roos]{Joris Roos}
\address{Joris Roos\\
Department of Mathematical Sciences\\
University of Massachusetts Lowell\\
Lowell, MA 01854, USA\\
\& School of Mathematics\\
The University of Edinburgh\\
Edinburgh EH9 3FD, UK}
\email{joris\_roos@uml.edu}
\thanks{The first author was supported by National Science Foundation grant DMS-1901413.
The third author was supported in part by the Croatian Science Foundation under the project IP-2018-01-7491 (DEPOMOS)}
\subjclass[2020]{Primary 37A30, Secondary 37A46}
\begin{document}

\begin{abstract}
We prove a.e.\@ convergence of continuous-time quadratic averages with respect to two commuting $\mathbb{R}$-actions, coming from a single jointly measurable measure-preserving $\mathbb{R}^2$-action on a probability space.
The key ingredient of the proof comes from recent work on
multilinear singular integrals; more specifically, from the study of a curved model for the triangular Hilbert transform.
\end{abstract}

\maketitle

\def\R{\mathbb{R}}
\def\Z{\mathbb{Z}}

\numberwithin{equation}{section}


\section{Introduction}
In this note, we apply recent progress in multilinear harmonic analysis \cite{CDR20}
to a problem on convergence almost everywhere in the ergodic theory.

Suppose there to be given an action of the group $\mathbb{R}^2$ on a probability space $(X,\mathcal{F},\mu)$,
\[ \mathbb{R}^2 \times X \to X, \quad (g,x)\mapsto g\cdot x, \]
which is jointly measurable and measure-preserving.
In the language of Varadarajan \cite{V63:basics}, $(X,\mathcal{F})$ is a Borel $\mathbb{R}^2$-space and $\mu$ is an invariant measure.

An alternative way of looking at this setup is to define mutually commuting one-parameter groups of $(\mathcal{F},\mathcal{F})$-measurable measure-$\mu$-preserving transformations $(S^t\colon X\to X)_{t\in\mathbb{R}}$ and $(T^t\colon X\to X)_{t\in\mathbb{R}}$ by
\[ S^t x := (t,0)\cdot x,\quad T^t x := (0,t) \cdot x \]
for every $t\in\mathbb{R}$ and $x\in X$. That way the above $\mathbb{R}^2$-action can be rewritten simply as $((s,t),x) \mapsto S^s T^t x$, but note that we also require joint measurability of this map. On the other hand, $(t,x)\mapsto S^t x$ and $(t,x)\mapsto T^t x$ are two mutually commuting measure-preserving $\mathbb{R}$-actions. We find the latter viewpoint and notation more suggestive, as they emphasize analogies with the corresponding discrete setup, i.e., $\mathbb{Z}^2$-actions, which are determined simply by two commuting transformations $S=S^1$ and $T=T^1$; for example see \eqref{eqn:discreteaverages} and \eqref{eqn:linearaverages} below.

Fix $p,q\in[1,\infty]$ such that $1/p+1/q\leq 1$.
We are interested in the following continuous-time double averages:
\begin{equation}\label{eqn:mainaverages}
A_N(f_1,f_2)(x) := \frac{1}{N} \int_0^N f_1(S^t x) f_2(T^{t^2} x) \,\textup{d}t,
\end{equation}
defined for a positive real number $N$, functions $f_1\in\textup{L}^p(X)$ and $f_2\in\textup{L}^q(X)$, and a point $x\in X$.
If $f_1$ and $f_2$ are given, then for $\mu$-almost every $x$ the integrals in \eqref{eqn:mainaverages} exist and continuously depend on $N\in(0,\infty)$. Indeed, the Tonelli-Fubini theorem, H\"{o}lder's inequality, monotonicity of the $\textup{L}^p(X)$-norms, and the fact that $S^t$, $T^{t^2}$ preserve measure $\mu$, together imply
\[ \int_X \int_{0}^{M} \big| f_1(S^t x) f_2(T^{t^2} x) \big| \,\textup{d}t \,\textup{d}\mu(x) \leq M \|f_1\|_{\textup{L}^p(X)} \|f_2\|_{\textup{L}^q(X)} < \infty \]
for any positive number $M$.
Most of the literature that studies multiple ergodic averages simply takes the functions to be in $\textup{L}^\infty(X)$.

General single-parameter polynomial multiple ergodic averages were introduced by Bergelson and Leibman \cite{BL96:polyaver,BL02:nilaver}, albeit in a discrete setting.
The averages \eqref{eqn:mainaverages} constitute the simplest case of such polynomial (but not purely linear) averages with respect to several commuting group actions. This note establishes their convergence almost everywhere.

\begin{thm}\label{thm:main}
Let $((s,t),x)\mapsto S^s T^t x$ be a jointly measurable measure-preserving action of $\mathbb{R}^2$ on a probability space $(X,\mathcal{F},\mu)$. Let $p,q\in(1,\infty]$ satisfy $1/p+1/q\leq 1$.
Let $f_1\in\textup{L}^p(X)$ and $f_2\in\textup{L}^q(X)$.
Then for $\mu$-almost every $x\in X$ the limit
\[ \lim_{N\to\infty} \frac{1}{N} \int_0^N f_1(S^t x) f_2(T^{t^2} x) \,\textup{d}t \]
exists.
\end{thm}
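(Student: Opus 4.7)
\emph{Overall strategy.} The plan is the standard template for a.e.\ convergence of multilinear ergodic averages: reduce the problem to a quantitative variation-norm estimate for the corresponding Euclidean averages via a bilinear Calder\'on-type transference, prove the Euclidean estimate using the bounds of \cite{CDR20} on a curved model of the triangular Hilbert transform, and conclude.

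\emph{Transference to $\mathbb{R}^2$.} For each $x\in X$ define $F_i^x(u,v):=f_i(S^u T^v x)$; joint measurability of the $\mathbb{R}^2$-action makes $F_i^x$ jointly measurable in $(u,v,x)$. Introduce the Euclidean bilinear averages
\[ \mathcal{A}_N(F_1,F_2)(u,v):=\frac{1}{N}\int_0^N F_1(u+t,v)\,F_2(u,v+t^2)\,\textup{d}t, \]
so that $A_N(f_1,f_2)(x)=\mathcal{A}_N(F_1^x,F_2^x)(0,0)$ and, more generally, $A_N(f_1,f_2)(S^u T^v x)=\mathcal{A}_N(F_1^x,F_2^x)(u,v)$. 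The usual Calder\'on argument -- integrating the $L^r(X)$ norm of the variation of $A_N$ against translations by a large cube $Q\subset\mathbb{R}^2$ and exploiting measure-preservation -- shows that any $L^p(\mathbb{R}^2)\times L^q(\mathbb{R}^2)\to L^r(\mathbb{R}^2)$ variation-norm bound for $\mathcal{A}_N$ transfers, upon truncation of $F_i^x$ to a suitably enlarged cube containing $Q$ and a parabolic neighbourhood of length $N$, to the analogous bound for $A_N$ on $(X,\mathcal{F},\mu)$.

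\emph{Euclidean variation bound via \cite{CDR20}.} The analytic core is an estimate of the form
\[ \big\|V^s_{N>0}\,\mathcal{A}_N(F_1,F_2)\big\|_{L^r(\mathbb{R}^2)}\,\lesssim\,\|F_1\|_{L^p(\mathbb{R}^2)}\|F_2\|_{L^q(\mathbb{R}^2)} \]
for some finite exponent $s$ (with $1/r=1/p+1/q$), where $V^s$ denotes the $s$-variation seminorm in $N$. One splits the variation into a long piece along the dyadic sequence $N=2^k$ and a short piece on each $(2^k,2^{k+1}]$. Each dyadic difference $\mathcal{A}_{2^{k+1}}-\mathcal{A}_{2^k}$ is a bilinear operator whose smooth kernel is concentrated on the scale-$2^k$ arc of the parabola $(t,t^2)$; after a Littlewood--Paley decomposition in each input, every contribution is dominated by a suitably rescaled instance of the curved triangular Hilbert-type bilinear form whose $L^p\times L^q\to L^r$ boundedness is established in \cite{CDR20}. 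A small per-scale gain coming from off-diagonal frequency decay (the curvature of the parabola) makes the scale-by-scale estimates summable in $k$.

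\emph{Conclusion and main difficulty.} Transferring the Euclidean variation bound back to $X$ shows that $\{A_N(f_1,f_2)(x)\}_{N>0}$ has finite $s$-variation for $\mu$-a.e.\ $x\in X$; such a sequence is necessarily Cauchy, so its limit exists, which is exactly Theorem~\ref{thm:main}. The main obstacle is the Euclidean variation bound itself: the multiscale decomposition must align the dyadic pieces with the curved triangular Hilbert structure treated in \cite{CDR20}, and the per-scale bounds must be sharp enough to be summable in $k$. Extracting the off-diagonal decay from parabolic curvature that drives this summability is precisely the phenomenon \cite{CDR20} is designed to exploit.
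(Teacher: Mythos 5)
Your overall architecture (transference to $\mathbb{R}^2$, then a quantitative Euclidean estimate powered by \cite{CDR20}) matches the paper's in spirit, but the analytic core of your argument has a genuine gap. You reduce everything to an $L^p\times L^q\to L^r$ variation-norm bound for the bilinear parabolic averages $\mathcal{A}_N$, and you justify it by saying each dyadic piece ``is dominated by a suitably rescaled instance of the curved triangular Hilbert-type bilinear form whose $L^p\times L^q\to L^r$ boundedness is established in \cite{CDR20}.'' That is not what \cite{CDR20} provides. The input available there (Theorem~\ref{thm:local} in the paper) is a single-scale \emph{smoothing} inequality at $L^2\times L^2\to L^1$: one gains a factor $\lambda^{-\sigma}$ only under the hypothesis that one of the inputs has Fourier support at distance $\ge\lambda$ from the relevant frequency axis. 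No unconditional $L^p\times L^q\to L^r$ bound for a curved triangular Hilbert transform, let alone a variational one, is available. Concretely, your scale-by-scale summation fails on the low-frequency part of $F_1$: after Littlewood--Paley decomposition, the pieces where $\widehat{F_1}$ is supported near $\xi_1=0$ receive no gain from curvature, each scale contributes an $O(1)$ term, and the sum over $k$ diverges. Proving the variational estimate you need is precisely the kind of quantitative strengthening the paper explicitly declines to attempt.

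The paper routes around this obstruction with two soft reductions. First, a lacunary subsequence trick plus an elementary weak-type $(1,\infty)$ maximal bound (obtained by H\"older and the single-function maximal ergodic theorem, no bilinear singular integral input) reduces matters to convergence along $N=\alpha^n$ for $f_1,f_2$ in dense subclasses. Second --- and this is the step your proposal is missing --- $f_1$ is decomposed via the splitting $\textup{L}^2(X)=\overline{\mathop{\textup{span}}\bigcup_\delta\mathop{\textup{img}}(U^\delta-I)}\oplus\bigcap_\delta\ker(U^\delta-I)$. For the coboundary-type pieces $g\circ S^\delta-g$, the difference structure produces the Fourier multiplier factor $|e^{2\pi i\delta\xi_1}-1|\lesssim\delta R$ on frequencies $|\xi_1|\le R$, which is exactly what tames the low-frequency part that defeats your summation; combined with the \cite{CDR20} gain on $|\xi_1|>R$ and the choice $R=\delta^{-1/2}$, this yields an $L^1$ bound decaying like $N^{-\gamma}$, whence convergence to $0$ along lacunary scales by Borel--Cantelli-type summation. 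For the $S$-invariant piece, the average collapses to a single-function quadratic average $\frac1N\int_0^N f_2(T^{t^2}x)\,\textup{d}t$, which is handled by citing the continuous-time Bourgain-type theorem of Bergelson--Leibman--Moreira; your proposal never addresses this component. To repair your argument you would either have to prove the variational estimate (a substantially harder open-ended task) or adopt some version of the dense-subclass decomposition.
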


To the authors' knowledge, this is
the first result on pointwise convergence of some single-parameter multiple ergodic averages with respect to two  general commuting $\mathbb{R}$--actions, without any structural assumptions on the measure-preserving system in question.

Generalizations of continuous-time single-parameter averages \eqref{eqn:mainaverages} to $\mathbb{R}^D$-actions, several polynomials, and several functions were studied by Austin \cite{A12:normpoly}. He showed that these multiple averages always converge in the $\textup{L}^2$-norm when the functions are taken from $\textup{L}^\infty(X)$. The paper \cite{A12:normpoly} also emphasizes simplifications coming from working in the continuous-time setting, as opposed to the discrete one. The most notable simplification comes
from the ability to change variables in integrals with respect to the time-variable. Bergelson, Leibman, and Moreira \cite{BLM12:cont} went a step further by giving general principles for deducing continuous results on convergence of various ergodic averages from their discrete analogues.
A discrete-time analogue of Austin's $\textup{L}^2$-convergence result was later established
(in the greater generality of nilpotent group actions) by Walsh \cite{W12:norm}.

However, pointwise results on single-parameter multiple ergodic averages are much more difficult in either of the two settings. Without further structural assumptions, a.e.\@ convergence is only known for double averages with respect to a single (invertible bi-measurable) measure-preserving transformation $T\colon X\to X$,
\[ \frac{1}{N} \sum_{n=0}^{N-1} f_1(T^{P_1(n)}x) f_2(T^{P_2(n)}x), \]
when either $P_1,P_2$ are both linear polynomials (a result by Bourgain \cite{B90:aedouble}, with its continuous-time analogue formulated explicitly in \cite[Theorem 8.30]{BLM12:cont}) or when $P_1$ is linear and $P_2$ has degree greater than $1$ (a recent result by Krause, Mirek, and Tao \cite{KMT20:aepoly}). The latter case naturally motivates the study of averages
\begin{equation}\label{eqn:discreteaverages}
\frac{1}{N} \sum_{n=0}^{N-1} f_1(S^n x) f_2(T^{n^2}x),
\end{equation}
where $S,T\colon X\to X$ are now two commuting (invertible bi-measurable) measure-preserving transformations. Convergence a.e.\@ of \eqref{eqn:discreteaverages} is still open at the time of writing and Theorem~\ref{thm:main} solves a continuous-time analogue of this problem. As yet another source of motivation we mention that a.e.\@ convergence of purely linear double averages
\begin{equation}\label{eqn:linearaverages}
\frac{1}{N} \sum_{n=0}^{N-1} f_1(S^n x) f_2(T^n x)
\end{equation}
is also a well-known open problem for general commuting $S$ and $T$; see the survey paper by Frantzikinakis \cite{F16:open}. On the other hand, continuous-time analogues of \eqref{eqn:linearaverages} are thought to be equally difficult as \eqref{eqn:linearaverages} themselves: crucial differences disappear in the case of linear powers of transformations.
We remark in passing that a.e.\@ convergence is known for various multi-parameter multiple ergodic averages, such as two types of ``cubic'' averages, see \cite{As10:cubic,CF12:cubic} and \cite{DS16:cubic,DS16:multi}, or ``additionally averaged'' averages, see \cite{HSY19:pt,DS15:pt,DS16:multi}.
Questions on convergence of such averages tend to be easier, but these objects appear naturally in studies of single-parameter averages.

It may be of interest to establish more quantitative variants of Theorem~\ref{thm:main}.
We exploit two nonquantitative reductions:
We use a maximal function inequality combined with convergence on a dense subset
(as opposed to bounding a certain variational norm, as in \cite{B88b:aepoly,B89:aepoly,KMT20:aepoly}),
and we work with lacunary sequences of scales
(as opposed to discussing long and short jumps separately, as in \cite{JSW08:jumps}).

A minor modification of the proof presented here can establish a.e.\@ convergence of variants of the averages \eqref{eqn:mainaverages} in which $t^2$ is replaced with $t^\kappa$ for some fixed positive number $\kappa\neq 1$. Indeed,
for the main technical ingredient of the proof, Theorem \ref{thm:local},
this generalization is sketched in
\cite{CDR20}. The particular choice $\kappa=2$ is also used below in connection
with \eqref{eqn:ergredconv2} and \eqref{eqn:bdelta}, but at those junctures of the proof,
the restriction to $\kappa=2$ is an inessential matter of convenience.

The rest of the paper is dedicated to the proof of Theorem~\ref{thm:main}.
We can assume $p,q\in(1,\infty)$ and $1/p+1/q=1$. Indeed, the $\textup{L}^p$-spaces with respect to a finite measure are nested, which allows raising of either of the two exponents.
Otherwise, the largest range of $(p,q)\in[1,\infty]^2$ in which the a.e.\@ convergence result holds is not clear and even justification of the defining formula \eqref{eqn:mainaverages} is not immediate. A nontrivial $\textup{L}^1$ counterexample for single-function discrete-time quadratic averages was given by Buczolich and Mauldin \cite{BM10:diver}; also see \cite{LaV11:diver} for an extension of their result.

\subsection{Notation}
For two functions $A,B\colon X\to[0,\infty)$ and a set of parameters $P$ we write $A(x) \lesssim_P B(x)$ if the inequality $A(x) \leq C_P B(x)$ holds for each $x\in X$ with a constant $C_P$ depending on the parameters from $P$, but independent of $x$.
Let $\mathbbm{1}_S$ denote the \emph{indicator function} of a set $S\subseteq X$, where the ambient set $X$ is understood from context.
The \emph{floor} of $x\in \R$ will be denoted $\lfloor x\rfloor$; it is the largest integer not exceeding $x$.

If $(X,\mathcal{F},\mu)$ is a measure space and $p\in[1,\infty)$, then the \emph{$\textup{L}^p$-norm} of an $\mathcal{F}$-measurable function $f\colon X\to\mathbb{C}$ is defined as
\[ \|f\|_{\textup{L}^p(X)} := \Big( \int_X |f(x)|^p \,\textup{d}\mu(x) \Big)^{1/p}.\]
We also set
\[ \|f\|_{\textup{L}^\infty(X)} := \mathop{\textup{ess\,sup}}_{x\in X}|f(x)|. \]
On the other hand, the \emph{weak $\textup{L}^p$-norm} is defined as
\[ \|f\|_{\textup{L}^{p,\infty}(X)} := \Big( \sup_{\alpha\in(0,\infty)} \alpha^p \mu\big(\{ x\in X : |f(x)|>\alpha\}\big) \Big)^{1/p}. \]
Occasionally, the variable with respect to which the norm is taken will be denoted in the subscript, so that we can write $\|f(x)\|_{\textup{L}^p_x(X)}$ in place of $\|f\|_{\textup{L}^p(X)}$.
On $\R^d$ the Lebesgue measure will always be understood.

The \emph{Fourier transform} of $f\in\textup{L}^1(\R^d)$ is defined as
\[ \widehat{f}(\xi) := \int_{\R^d} f(x) e^{-2\pi i x\cdot \xi} \,\textup{d}x \]
for each $\xi\in\R^d$, where $(x,y)\mapsto x\cdot y$ is the standard scalar product on $\mathbb{R}^d$.
The map $f\mapsto \widehat{f}$ extends by continuity to the space $\textup{L}^2(\R^d)$, where it becomes a linear isometric isomorphism.

We write $\mathop{\textup{span}}(S)$ for the linear span of a set of vectors $S$ in some linear space. If $V$ and $W$ are mutually orthogonal subspaces of some inner product space, then $V\oplus W$ will denote their \emph{(orthogonal) sum}, i.e., the linear span of their union.
Finally, $\mathop{\textup{img}}(L)$ and $\mathop{\textup{ker}}(L)$ will, respectively, denote the range and the null space of a linear operator $L$.

\section{Ergodic theory reductions}

Theorem~\ref{thm:main} will be deduced from the following proposition dealing with functions on the real line.

\begin{prop}\label{prop:main}
For each $\delta\in(0,1]$ there exists a constant $\gamma\in(0,1)$ such that
\begin{equation}\label{eqn:harmL1bound}
\Big\| \frac{1}{N} \int_0^N \big( F_1(u+t+
\delta,v) - F_1(u+t,v) \big) F_2(u,v+t^2) \,\textup{d}t \Big\|_{\textup{L}^1_{(u,v)}(\mathbb{R}^2)} \lesssim_{\gamma,\delta} N^{-\gamma} \|F_1\|_{\textup{L}^2(\mathbb{R}^2)} \|F_2\|_{\textup{L}^2(\mathbb{R}^2)}
\end{equation}
for every $N\in[1,\infty)$ and all $F_1,F_2\in\textup{L}^2(\mathbb{R}^2)$.
\end{prop}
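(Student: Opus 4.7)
The plan is to dyadically decompose the $t$-integration on $[0,N]$ and invoke on each scale the local curved bilinear estimate that is Theorem~\ref{thm:local}, the main harmonic-analytic input imported from \cite{CDR20}. The difference operator $\Delta_\delta F_1(u,v) := F_1(u+\delta,v)-F_1(u,v)$ is what makes this local bound applicable: its Fourier multiplier $e^{2\pi i\delta\xi_1}-1$ vanishes at $\xi_1=0$, so the first function effectively has no zero-frequency content in the first variable, which is the non-degeneracy hypothesis that distinguishes the curved triangular Hilbert transform from the ordinary product and supplies the oscillation that beats the trivial Cauchy--Schwarz bound.

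Concretely, I would first split $[0,N]=[0,1)\cup\bigcup_{k=0}^{K-1}[2^k,2^{k+1})$ with $K:=\lceil\log_2 N\rceil$. The contribution from $\int_0^1$ is controlled by Fubini--Tonelli and Cauchy--Schwarz by $2N^{-1}\|F_1\|_{\textup{L}^2}\|F_2\|_{\textup{L}^2}$, already well within the target. For each dyadic slab, the change of variable $t=2^ks$, $s\in[1,2]$, normalizes the integral to unit length in $s$, producing
\[
\int_1^2 \bigl(F_1(u+2^k s+\delta,v)-F_1(u+2^k s,v)\bigr)\,F_2(u,v+2^{2k}s^2)\,\textup{d}s,
\]
which is exactly the shape of bilinear operator to which Theorem~\ref{thm:local} applies, at scale $R=2^k$. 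The trivial size of this dyadic piece in $\textup{L}^1_{(u,v)}$ is $\lesssim 2^k\|F_1\|_{\textup{L}^2}\|F_2\|_{\textup{L}^2}$; the local bound is designed to improve this to $C_\delta\,2^{k(1-\gamma_0)}\|F_1\|_{\textup{L}^2}\|F_2\|_{\textup{L}^2}$ for some $\gamma_0=\gamma_0(\delta)\in(0,1)$. Summing the geometric series and dividing by $N$ then yields
\[
\tfrac{1}{N}\sum_{k=0}^{K-1} C_\delta\,2^{k(1-\gamma_0)}\|F_1\|_{\textup{L}^2}\|F_2\|_{\textup{L}^2}\lesssim_{\gamma_0,\delta} N^{-\gamma_0}\|F_1\|_{\textup{L}^2}\|F_2\|_{\textup{L}^2},
\]
which is precisely \eqref{eqn:harmL1bound} with $\gamma=\gamma_0$.

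The main obstacle I anticipate is matching the hypotheses of Theorem~\ref{thm:local} exactly to what $\Delta_\delta$ supplies, and tracking the dependence of constants on $\delta$. The natural maneuver is a smooth Littlewood--Paley decomposition of $F_1$ in its first-variable frequency at the threshold $|\xi_1|\sim 1/\delta$: on the low-frequency side, $|e^{2\pi i\delta\xi_1}-1|\lesssim\delta|\xi_1|$ produces a crude bound with a $\delta$-gain that is summable after the dyadic decomposition, while the high-frequency side is where a single application of Theorem~\ref{thm:local} delivers the genuine scale decay $2^{-k\gamma_0}$. Balancing these two regimes is the technical heart of the argument and is what determines the admissible $\gamma=\gamma(\delta)\in(0,1)$; the rest is the routine geometric summation described above.
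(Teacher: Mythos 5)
Your scaffolding---dyadic decomposition of the $t$-integral, rescaling each block to $t\in[1,2]$, and a frequency splitting of $F_1$ in $\xi_1$ that sends the high-frequency part to Theorem~\ref{thm:local} and treats the low-frequency part via the multiplier bound $|e^{2\pi i\delta\xi_1}-1|\lesssim\delta|\xi_1|$---is exactly the paper's. But the one quantitative choice you commit to, a Littlewood--Paley cut at the \emph{fixed} threshold $|\xi_1|\sim 1/\delta$, cannot produce any decay in $N$, and the step you defer as ``the technical heart'' (balancing the two regimes) is precisely where the content of the proposition lies. Concretely, let $F_{1,R}$ be the part of $F_1$ with $|\widehat{F_1}|$ supported in $|\xi_1|\le R$ with $R$ independent of $k$. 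Plancherel gives $\|F_{1,R}(\cdot+\delta,\cdot)-F_{1,R}\|_{\textup{L}^2}\lesssim\delta R\,\|F_1\|_{\textup{L}^2}$, so Cauchy--Schwarz bounds the low-frequency contribution of the $k$-th block by $2^{k}\,\delta R\,\|F_1\|_{\textup{L}^2}\|F_2\|_{\textup{L}^2}$; summing over $2^k\le N$ and dividing by $N$ leaves $\delta R\,\|F_1\|_{\textup{L}^2}\|F_2\|_{\textup{L}^2}$, with no power of $N^{-1}$ at all---and at your threshold $R=1/\delta$ the factor $\delta R\simeq 1$, so this is just the trivial bound. The per-scale gain $2^{-k\gamma_0}$ you assert is delivered only on the high-frequency side; the low-frequency side, as you have set it up, contributes a constant to $\frac1N\int_0^N$, so the argument does not close.

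The fix, which is what the paper does, is to make the threshold scale-dependent. After rescaling the $k$-th block---note that you must rescale $(u,v)\mapsto(2^k u,2^{2k}v)$ together with $t=2^k s$ and renormalize $F_j$, since $F_1(u+2^k s+\delta,v)F_2(u,v+2^{2k}s^2)$ as written is not of the form in Theorem~\ref{thm:local}---the shift becomes $\delta_k=2^{-k}\delta$, and one proves a single unit-scale estimate $\|B_{\delta'}(F_1,F_2)\|_{\textup{L}^1}\lesssim(\delta')^{\gamma}\|F_1\|_{\textup{L}^2}\|F_2\|_{\textup{L}^2}$ uniformly in $\delta'\in(0,1]$ by cutting at $R=(\delta')^{-1/2}$ and balancing $\delta' R$ against $R^{-\sigma}$; this yields $\gamma=\min\{1/2,\sigma/2\}$, independent of $\delta$. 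Applied with $\delta'=\delta_k$ it gives your claimed $2^{k(1-\gamma)}\delta^{\gamma}$ per block, and the geometric sum then produces the $N^{-\gamma}$. Two further points you gloss over: Theorem~\ref{thm:local} requires a smooth cutoff $\zeta(x,y,t)$ compactly supported in $(x,y)$ as well as in $t$ away from $0$, so one must insert a partition of unity in $(x,y)$ and resum by Cauchy--Schwarz, and replace $\mathbbm{1}_{[1,2]}(t)$ by a smooth $\varphi$ at the cost of an $O(\delta')$ error. These are routine, but they belong in a complete proof.
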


The proof of Proposition \ref{prop:main} will be postponed until the next section. Moreover, we will see that the quantifiers can be reversed: we will be able to choose $\gamma$ that works for each $\delta$. Here we show how \eqref{eqn:harmL1bound} implies the main result.

\begin{proof}[Proof of Theorem \ref{thm:main}]
Let $p^{-1}+q^{-1}=1$.
We begin by applying
a variant of the so-called \emph{lacunary subsequence trick}; see \cite[Appendix~A]{FLW12:lac}. It reduces Theorem~\ref{thm:main} to proving that
\begin{equation}\label{eqn:subseqconv}
\textup{$\big( A_{\alpha^n}(f_1,f_2)(x) \big)_{n=0}^{\infty}$ converges in $\mathbb{C}$ for a.e.\@ $x\in X$}
\end{equation}
for every fixed $\alpha\in(1,\infty)$.
Indeed, we can assume that $f_1$ and $f_2$ are nonnegative functions, because otherwise we can split them, first into real and imaginary, and then into positive and negative parts.
Denoting by $\lfloor y\rfloor$ the largest integer not exceeding a real number $y$, we can estimate
\[ \alpha^{-1} A_{\alpha^{\lfloor\log_{\alpha}N\rfloor}}(f_1,f_2)(x) \leq A_N(f,g)(x) \leq \alpha A_{\alpha^{\lfloor\log_{\alpha}N\rfloor+1}}(f_1,f_2)(x) \]
and this implies
\begin{align}
\alpha^{-1} \liminf_{\mathbb{N}\ni n\to\infty}A_{\alpha^n}(f_1,f_2)(x) & \leq \liminf_{\mathbb{R}\ni N\to\infty}A_N(f_1,f_2)(x) \nonumber \\
\leq \limsup_{\mathbb{R}\ni N\to\infty}A_N(f_1,f_2)(x) & \leq \alpha \limsup_{\mathbb{N}\ni n\to\infty}A_{\alpha^n}(f_1,f_2)(x). \label{eqn:lacnested}
\end{align}
By \eqref{eqn:subseqconv} applied with $\alpha=2^{2^{-m}}$ we know that at almost every point $x\in X$ the limit
\[ \lim_{n\to\infty}A_{2^{n2^{-m}}}(f_1,f_2)(x) \]
exists for each positive integer $m$. Its value is independent of $m$, since the corresponding sequences are subsequences of each other, so we can denote it by $L(x)\in[0,\infty)$. For any such $x$ the estimate \eqref{eqn:lacnested} gives
\[ 2^{-2^{-m}} L(x) \leq \liminf_{N\to\infty}A_N(f_1,f_2)(x) \leq \limsup_{N\to\infty}A_N(f_1,f_2)(x) \leq 2^{2^{-m}} L(x), \]
so we may
let $m\to\infty$ and conclude that $\lim_{N\to\infty}A_N(f_1,f_2)(x)$ exists and also equals $L(x)$.

We will also use the following easy weak-type inequality:
\begin{equation}\label{eqn:weakmaxbound}
\Big\| \sup_{N\in(0,\infty)} \big| A_N(f_1,f_2) \big| \Big\|_{\textup{L}^{1,\infty}(X)}
\lesssim_{p,q} \|f_1\|_{\textup{L}^{p}(X)} \|f_2\|_{\textup{L}^{q}(X)}
\end{equation}
for every $N\in(0,\infty)$, $f_1\in\textup{L}^p(X)$, and $f_2\in\textup{L}^q(X)$.
It will enable us to restrict attention to dense subspaces of functions $f_1\in\textup{L}^{p}(X)$ and $f_2\in\textup{L}^{q}(X)$ by the aforementioned a.e.\@ convergence paradigm.
In order to prove \eqref{eqn:weakmaxbound} one can first apply H\"{o}lder's inequality, followed by the change of variables $s=t^2$ and a dyadic splitting of the integral in the second term:
\[ \big| A_N(f_1,f_2) \big| \leq
\Big( \frac{1}{N} \int_0^{N}|f_1(S^t x)|^p\,\textup{d}t \Big)^{1/p}
\bigg( \sum_{m=1}^{\infty} 2^{-m/2} \frac{1}{2^{-m+1}N^2} \int_0^{2^{-m+1}N^2} |f_2(T^s x)|^q\,\textup{d}s \bigg)^{1/q}. \]
Then one can take the supremum in $N$ and recall H\"{o}lder's inequality in Lorentz spaces \cite{ON63:Lorentz} to bound the left hand side of \eqref{eqn:weakmaxbound} by
\[ \Big\| \sup_{N\in(0,\infty)} \frac{1}{N} \int_0^{N}|f_1(S^t x)|^p\,\textup{d}t \Big\|_{\textup{L}^{1,\infty}(X)}^{1/p}
\Big\| \sup_{N\in(0,\infty)} \frac{1}{N} \int_0^{N}|f_2(T^t x)|^q\,\textup{d}t \Big\|_{\textup{L}^{1,\infty}(X)}^{1/q}. \]
	It remains to apply the maximal ergodic weak $\textup{L}^1$ inequality to the functions $|f_1|^p$ and $|f_2|^q$. If one only wants to use the well-known discrete-time maximal ergodic theorem, one can borrow a trick from \cite{BLM12:cont}, i.e., restrict the values
of $N$ to the grid $\delta\mathbb{Z}$ for some $\delta>0$ and apply the discrete-time theory to the $\textup{L}^1$ functions
\[ g_1(x) := \frac{1}{\delta} \int_0^{\delta}|f_1(S^t x)|^p\,\textup{d}t, \quad g_2(x) := \frac{1}{\delta} \int_0^{\delta}|f_2(T^t x)|^q\,\textup{d}t. \]
This completes the proof of \eqref{eqn:weakmaxbound}.

A strengthening of \eqref{eqn:weakmaxbound} with the ordinary (strong) $\textup{L}^1$-norm on the left hand side can be deduced by the method of transference from \cite[Theorem 2]{CDR20}, dealing with functions on the real line. We do not need this strengthening here,
since weak-type maximal inequalities are sufficient for the intended purpose of extending a.e.\@ convergence.

A crucial ingredient of the proof of Theorem~\ref{thm:main}
is the following estimate.
	
\begin{lemma}
For each $\delta\in(0,1]$ there exists a constant $\gamma\in(0,1]$ such that
\begin{equation}\label{eqn:ergL1bound}
\Big\| \frac{1}{N} \int_0^N \big( f_1(S^{t+\delta} x) - f_1(S^t x) \big) f_2(T^{t^2} x) \,\textup{d}t \Big\|_{\textup{L}_{x}^{1}(X)}
\lesssim_{\gamma,\delta} N^{-\gamma} \|f_1\|_{\textup{L}^{2}(X)} \|f_2\|_{\textup{L}^{2}(X)}
\end{equation}
for every $N\in[1,\infty)$ and every $f_1,f_2\in\textup{L}^2(X)$.
\end{lemma}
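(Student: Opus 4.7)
The plan is to prove \eqref{eqn:ergL1bound} by a Calder\'{o}n-type transference that lifts the problem on $X$ to one on $\mathbb{R}^2$, where Proposition~\ref{prop:main} applies. For a large parameter $R\gg N^2$, for each $x\in X$ I would define truncated lifts
\[ F_i^x(u,v) := \mathbbm{1}_{[-R,R]^2}(u,v)\, f_i(S^u T^v x), \quad i=1,2. \]
Joint measurability of the $\mathbb{R}^2$-action makes these functions $\mathcal{B}(\mathbb{R}^2)\otimes\mathcal{F}$-measurable, and Tonelli together with measure-preservation gives
\[ \int_X \|F_i^x\|_{\textup{L}^2(\mathbb{R}^2)}^2\,\textup{d}\mu(x) = (2R)^2\,\|f_i\|_{\textup{L}^2(X)}^2. \]

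Next I would apply Proposition~\ref{prop:main} pointwise to $F_1^x$ and $F_2^x$, integrate the resulting inequality in $x$, and use Cauchy--Schwarz on the product of $\textup{L}^2$-norms:
\[ \int_X \!\int_{\mathbb{R}^2}\! \Big| \frac{1}{N}\int_0^N \big(F_1^x(u+t+\delta,v)-F_1^x(u+t,v)\big) F_2^x(u,v+t^2)\,\textup{d}t \Big|\,\textup{d}u\,\textup{d}v\,\textup{d}\mu(x) \lesssim_{\gamma,\delta} N^{-\gamma} (2R)^2 \|f_1\|_2 \|f_2\|_2. \]
I would then restrict the outer $(u,v)$-integration to the inner box $B_R := [-R,R-N-\delta]\times[-R,R-N^2]$, on which the three cutoff factors $\mathbbm{1}_{[-R,R]^2}$ all equal $1$ simultaneously for every $t\in[0,N]$. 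On $B_R$ the integrand becomes $|G(S^u T^v x)|$, where
\[ G(y) := \frac{1}{N}\int_0^N \big(f_1(S^{t+\delta} y) - f_1(S^t y)\big) f_2(T^{t^2} y)\,\textup{d}t \]
is precisely the function whose $\textup{L}^1(X)$-norm we wish to control.

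Finally, Fubini combined with the measure-preservation of $x\mapsto S^u T^v x$ for each fixed $(u,v)$ yields
\[ \int_X\!\int_{B_R} |G(S^u T^v x)|\,\textup{d}u\,\textup{d}v\,\textup{d}\mu(x) = |B_R|\,\|G\|_{\textup{L}^1(X)}. \]
Dividing through by $|B_R|=(2R-N-\delta)(2R-N^2)$ and letting $R\to\infty$, the ratio $(2R)^2/|B_R|$ tends to $1$ (since $N$ and $\delta$ are fixed), and the asserted bound \eqref{eqn:ergL1bound} follows with the same constant and exponent $\gamma$ supplied by Proposition~\ref{prop:main}.

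The only subtle point is the measurability setup required for Fubini and for the pointwise application of Proposition~\ref{prop:main}, which is ensured by the joint measurability assumption in Theorem~\ref{thm:main}; the boundary error is harmless because $N$ is fixed while $R\to\infty$. No intrinsic difficulty remains once the proposition on $\mathbb{R}^2$ has been granted.
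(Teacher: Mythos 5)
Your proof is correct and follows essentially the same route as the paper: both are instances of the Calder\'{o}n transference principle, lifting $f_j$ to truncated functions $f_j(S^uT^vx)$ on $\mathbb{R}^2$, applying Proposition~\ref{prop:main} for each fixed $x$, and integrating in $x$ using invariance of $\mu$. The only (immaterial) differences are that the paper truncates to a fixed box $[0,3N]\times[0,2N^2]$ adapted to the two scales, absorbing the volume ratio into an explicit constant $6$, whereas you use a large square $[-R,R]^2$ and let $R\to\infty$; and the paper replaces your Cauchy--Schwarz step in $x$ by a normalization plus the AM--GM inequality.
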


\begin{proof}
We deduce \eqref{eqn:ergL1bound}
from Proposition \ref{prop:main} using the \emph{Calder\'{o}n transference principle} \cite{Cal68:transf}.
By homogeneity it is sufficient to prove the inequality
\eqref{eqn:ergL1bound}
for functions $f_1$ and $f_2$ normalized to satisfy
\[ \|f_1\|_{\textup{L}^{2}(X)} = \|f_2\|_{\textup{L}^{2}(X)} = 1. \]
For each $x\in X$ and $N\geq 1$
define functions $F_1^{x,N},F_2^{x,N}\colon\mathbb{R}^2\to\mathbb{C}$
by
\[ F_j^{x,N}(u,v) := f_j(S^u T^v x) \mathbbm{1}_{[0,3N]}(u) \mathbbm{1}_{[0,2N^2]}(v) \]
for $(u,v)\in\mathbb{R}^2$ and $j=1,2$. Since the measure $\mu$ is invariant under the $\mathbb{R}^2$-action in question, we can rewrite the left hand side of \eqref{eqn:ergL1bound} as
\begin{align*}
& \frac{1}{N^3} \int_{0}^{N} \int_{0}^{N^2} \int_X \Big| \frac{1}{N} \int_0^N \big( f_1(S^{t+\delta} S^u T^v x) - f_1(S^t S^u T^v x) \big) f_2(T^{t^2} S^u T^v x) \,\textup{d}t \Big| \,\textup{d}\mu(x) \,\textup{d}u \,\textup{d}v \\
& \leq \frac{1}{N^3} \int_X \Big\| \frac{1}{N} \int_0^N \big( F_1^{x,N}(u+t+
\delta,v) - F_1^{x,N}(u+t,v) \big) F_2^{x,N}(u,v+t^2) \,\textup{d}t \Big\|_{\textup{L}^1_{(u,v)}(\mathbb{R}^2)} \,\textup{d}\mu(x).
\end{align*}
An application of \eqref{eqn:harmL1bound} with functions $F_1^{x,N}, F_2^{x,N}$ for each fixed $x\in X$ bounds the last display by a constant multiple of
\begin{align*}
& \frac{1}{N^3} \int_X N^{-\gamma} \frac{1}{2} \big( \big\|F_1^{x,N}\big\|^2_{\textup{L}^2(\mathbb{R}^2)} + \big\|F_2^{x,N}\big\|^2_{\textup{L}^2(\mathbb{R}^2)} \big) \,\textup{d}\mu(x) \\
& = N^{-\gamma} \frac{1}{N^3} \int_{0}^{3N} \int_{0}^{2N^2} \int_X \frac{1}{2} \big( |f_1(S^u T^v x)|^2 + |f_2(S^u T^v x)|^2 \big) \,\textup{d}\mu(x) \,\textup{d}u \,\textup{d}v \\
& = 6 N^{-\gamma} \frac{1}{2} \big(\|f_1\|_{\textup{L}^{2}(X)}^2 + \|f_2\|_{\textup{L}^{2}(X)}^2 \big)
= 6 N^{-\gamma},
\end{align*}
where we have again used the invariance of $\mu$. This completes the proof of \eqref{eqn:ergL1bound}.
\end{proof}

For each $t\in\mathbb{R}$ let $U^t$ denote the unitary operator on $\textup{L}^2(X)$ given by the formula $U^t f := f\circ S^t$. Our final auxiliary claim is that
\begin{equation}\label{eqn:densesub}
\mathop{\textup{span}}\Big( \bigcup_{\delta\in(0,1]}  \mathop{\textup{img}}(U^\delta-I) \Big) \oplus \Big( \bigcap_{\delta\in(0,1]} \mathop{\textup{ker}}(U^\delta-I) \Big)
\end{equation}
is a dense subspace of $\textup{L}^2(X)$. Indeed, this easily follows from $\mathop{\textup{img}}(U^\delta-I) ^\bot = \mathop{\textup{ker}}(U^\delta-I)$ for each $\delta$, which, in turn, is a consequence of the fact that $U^\delta-I$ is a normal operator.

\medskip
We are now ready to complete the proof of Theorem~\ref{thm:main}.
By the initial reduction and the maximal inequality \eqref{eqn:weakmaxbound} we
need only establish
\eqref{eqn:subseqconv} for each fixed $\alpha\in(1,\infty)$ and for functions $f_1,f_2\in\textup{L}^2(X)$. The reason is, of course, that $\textup{L}^p(X)\cap\textup{L}^2(X)$ is dense in $\textup{L}^p(X)$, while $\textup{L}^q(X)\cap\textup{L}^2(X)$ is dense in $\textup{L}^q(X)$. By yet another application of \eqref{eqn:weakmaxbound}, this time with $p=q=2$, we see that it suffices to take $f_1$ from the dense subspace \eqref{eqn:densesub} of $\textup{L}^2(X)$. In other words, we can assume that $f_1$ is of the form
\[ \sum_{k=1}^{m} (g_k\circ S^{\delta_k}-g_k) + h, \]
where $m\in\mathbb{N}$, $\delta_1,\ldots,\delta_m\in(0,1]$,
$g_1,\ldots,g_m,h\in\textup{L}^2(X)$, and $h$ is such that $h\circ S^{t}=h$ for each $t\in(0,1]$, and thus also for each $t\in[0,\infty)$. That way the theorem is reduced to showing that
for any $f_1,f_2\in \textup{L}^2(X)$ and any parameters $\alpha>1$ and $\delta\in(0,1]$,
the two sequential limits
\begin{equation}\label{eqn:ergredconv1}
\lim_{n\to\infty} \frac{1}{\alpha^n} \int_0^{\alpha^n} \big( f_1(S^{t+\delta} x) - f_1(S^t x) \big) f_2(T^{t^2} x) \,\textup{d}t
\end{equation}
and
\begin{equation}\label{eqn:ergredconv2}
\lim_{n\to\infty} \frac{1}{\alpha^n} \int_0^{\alpha^n} f_2(T^{t^2} x) \,\textup{d}t
\end{equation}
exist (in $\mathbb{C}$) for a.e.\@ $x\in X$.

Estimate \eqref{eqn:ergL1bound} applied with $N=\alpha^n$ and summation in $n$ give
\[ \int_X \sum_{n=0}^{\infty} \Big| \frac{1}{\alpha^n} \int_0^{\alpha^n} \big( f_1(S^{t+\delta} x) - f_1(S^t x) \big) f_2(T^{t^2} x) \,\textup{d}t \Big| \,\textup{d}\mu(x)
\lesssim_{\gamma,\delta} \sum_{n=0}^{\infty} \alpha^{-\gamma n} \|f_1\|_{\textup{L}^{2}(X)} \|f_2\|_{\textup{L}^{2}(X)} < \infty. \]
Thus, for a.e.\@ $x\in X$ the sequence in \eqref{eqn:ergredconv1} converges to $0$, as a general term of a convergent series.

The limit in \eqref{eqn:ergredconv2} exists for a.e.\@ $x\in X$ by \cite[Theorem 8.31]{BLM12:cont}, which claims the same for general polynomial averages of a single $\textup{L}^2$ function and constitutes a continuous-time analogue of Bourgain's result from \cite{B88b:aepoly}.
\end{proof}

\section{Harmonic analysis reductions}

\begin{proof}[Proof of Proposition \ref{prop:main}]
Let $\zeta$ be a $C^\infty$ function compactly supported in $\R^2\times (\R\setminus\{0\})$.

\begin{thm}[\cite{CDR20}] \label{thm:local}
There exist $C,\sigma>0$ with the following property.
Let $F_1,F_2\in \textup{L}^2(\mathbb R^2)$ and let $\lambda\ge 1$.
Suppose that, for at least one of the indices $j=1,2$,
$\widehat{F}_j(\xi_1,\xi_2)$ vanishes whenever $|\xi_j| < \lambda$.
Then
\[ \Big\| \int_{\R} F_1(x+t,y)\, F_2(x,y+t^2)\, \zeta(x,y,t) \,\textup{d}t \Big\|_{\textup{L}^1_{(x,y)}(\R^2)}  \le C \lambda^{-\sigma} \|F_1\|_{\textup{L}^2(\R^2)}
\|F_2\|_{\textup{L}^2(\R^2)}. \]
\end{thm}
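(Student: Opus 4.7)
The plan is to prove Theorem~\ref{thm:local} by converting pointwise decay of the bilinear symbol (obtained from stationary phase in $t$) into an operator estimate through a $TT^{*}$ argument and a Littlewood--Paley decomposition. Because $\zeta$ is compactly supported in $(x,y)$, Cauchy--Schwarz gives
\[
\Big\| \int_{\R} F_{1}(x+t,y) F_{2}(x,y+t^{2}) \zeta(x,y,t) \, \textup{d}t \Big\|_{\textup{L}^{1}(\R^{2})}
\lesssim_{\zeta}
\Big\| \int_{\R} F_{1}(x+t,y) F_{2}(x,y+t^{2}) \zeta(x,y,t) \, \textup{d}t \Big\|_{\textup{L}^{2}(\R^{2})},
\]
so it suffices to prove the analogous $\textup{L}^{2}$ bound for the bilinear operator, which I denote $T(F_{1},F_{2})(x,y)$.

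Taking Fourier transforms in the $F_{j}$ and performing the $t$-integration yields
\[
T(F_{1},F_{2})(x,y) = \iint \widehat{F_{1}}(\xi)\widehat{F_{2}}(\eta) \, m(x,y,\xi_{1},\eta_{2}) \, e^{2\pi i(x(\xi_{1}+\eta_{1})+y(\xi_{2}+\eta_{2}))} \, \textup{d}\xi\,\textup{d}\eta,
\]
where $m(x,y,\xi_{1},\eta_{2}) := \int_{\R}\zeta(x,y,t)e^{2\pi i(t\xi_{1}+t^{2}\eta_{2})}\,\textup{d}t$. The $t$-support of $\zeta$ is compactly contained in $\R\setminus\{0\}$, so the stationary point $t_{*}=-\xi_{1}/(2\eta_{2})$ of the phase lies in it only when $|\xi_{1}|$ and $|\eta_{2}|$ are comparable in a range determined by $\zeta$. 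Stationary phase in the resonant regime combined with repeated integration by parts outside it gives, for every $N\ge 0$,
\[
|m(x,y,\xi_{1},\eta_{2})| \lesssim_{N} (1+|\eta_{2}|)^{-1/2}\mathbbm{1}_{\{|\xi_{1}|\sim|\eta_{2}|\}} + (1+\max(|\xi_{1}|,|\eta_{2}|))^{-N}.
\]
Under either frequency hypothesis on $F_{j}$, this forces $|m|\lesssim\lambda^{-1/2}$, with the non-resonant tails handled by absolute convergence.

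The main obstacle is that pointwise symbol decay does not automatically imply operator-norm decay for a bilinear Fourier multiplier. My plan is to perform a Littlewood--Paley decomposition $F_{j}=\sum_{k}\Delta_{k}^{(j)}F_{j}$ in the relevant frequency variable, reducing $T$ to a sum of model operators $T_{k}$ whose symbols are essentially supported on the dyadic scale $|\xi_{1}|\sim|\eta_{2}|\sim 2^{k}$. For each such block I would compute $\|T_{k}(\Delta_{k}F_{1},\Delta_{k}F_{2})\|_{\textup{L}^{2}}^{2}$ as a quadrilinear expression; the phase difference factors as
\[
(t-s)\xi_{1}+(t^{2}-s^{2})\eta_{2}=(t-s)\big(\xi_{1}+(t+s)\eta_{2}\big),
\]
so after the change of variables $(t,s)\mapsto(t-s,t+s)$ van der Corput's lemma extracts a block gain of the form $(2^{k})^{-\sigma}$ for some fixed $\sigma>0$.

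By the frequency hypothesis, only blocks with $2^{k}\gtrsim\lambda$ contribute to the sum. Triangle inequality across $k$ followed by Cauchy--Schwarz and Plancherel then converts the per-block decay $2^{-\sigma k}$ into the desired overall gain $\lambda^{-\sigma}\|F_{1}\|_{\textup{L}^{2}}\|F_{2}\|_{\textup{L}^{2}}$. The bulk of the technical work in \cite{CDR20} lies in establishing the per-block estimate uniformly in $k$ and in carefully handling the $(x,y)$-dependence of the symbol through $\zeta$ and the non-resonant remainder terms; the resulting exponent $\sigma>0$ need not be sharp, as any positive value suffices for the application to Proposition~\ref{prop:main}.
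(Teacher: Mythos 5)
First, a point of order: the paper does not prove Theorem~\ref{thm:local} at all --- it is quoted verbatim from \cite{CDR20} and used as a black box, so there is no internal proof to compare your sketch against. Judging the sketch on its own merits, the fatal step is the very first one. Replacing the $\textup{L}^1_{(x,y)}$ norm by the $\textup{L}^2_{(x,y)}$ norm via Cauchy--Schwarz on the support of $\zeta$ is a valid inequality, but it commits you to proving an $\textup{L}^2\times\textup{L}^2\to\textup{L}^2$ bound with gain $\lambda^{-\sigma}$, and that statement is \emph{false}. Take $\epsilon=\lambda^{-1}$, let $\phi$ be Schwartz with $\widehat{\phi}$ supported in $[-1,1]$, and set
\[
F_1(x,y)=e^{2\pi i\,10\lambda x}\,\epsilon^{-1}\phi(x/\epsilon)\phi(y/\epsilon),\qquad
F_2(x,y)=\epsilon^{-1}\phi((x+1)/\epsilon)\phi((y-1)/\epsilon),
\]
with $\zeta$ a bump near $(x,y,t)=(-1,0,1)$. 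Then $\|F_j\|_{\textup{L}^2}\sim 1$, $\widehat{F_1}$ vanishes for $|\xi_1|<9\lambda$, yet on the $\epsilon\times\epsilon$ box around $(-1,0)$ the $t$-integration runs over an interval of length $\sim\epsilon=\lambda^{-1}$ on which the modulation $e^{2\pi i\,10\lambda t}$ completes only $O(1)$ oscillations, so $|T(F_1,F_2)|\sim\epsilon^{-1}$ there. Hence $\|T(F_1,F_2)\|_{\textup{L}^1}\sim\epsilon=\lambda^{-1}$ (consistent with the theorem) while $\|T(F_1,F_2)\|_{\textup{L}^2}\sim 1$ with no gain whatsoever. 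The $\textup{L}^1$ target is not a convenience; it is where the content lives.

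This error propagates: your plan to expand $\|T_k(\Delta_kF_1,\Delta_kF_2)\|_{\textup{L}^2}^2$ as a quadrilinear form is squaring the wrong quantity. The correct way to ``square'' while staying at the $\textup{L}^1$ level --- and, as far as I can tell, what \cite{CDR20} actually does --- is to dualize, i.e.\@ study the trilinear form $\int_{\R^2} T(F_1,F_2)(x,y)\,F_3(x,y)\,\textup{d}x\,\textup{d}y$ with $\|F_3\|_{\textup{L}^\infty}\le 1$, and apply Cauchy--Schwarz to the form itself so as to eliminate $F_3$ together with one of $F_1,F_2$; only then does one reach a quadrilinear expression in the remaining function to which the factored phase $(t-s)(\xi_1+(t+s)\eta_2)$ and van der Corput (or sublevel-set) estimates apply. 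Your later ingredients --- the stationary-phase bound $|m|\lesssim\lambda^{-1/2}$ on the resonant set, the change of variables $(t,s)\mapsto(t-s,t+s)$, the use of curvature of $t\mapsto t^2$ --- are all in the right spirit, but they are attached to an intermediate claim that cannot be true; in addition, the summation over Littlewood--Paley blocks in $\textup{L}^2$ would require an almost-orthogonality in the output variables that the decomposition in $(\xi_1,\eta_2)$ does not provide, since $\eta_1$ and $\xi_2$ remain unconstrained.
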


For an auxiliary function $\zeta$ as before, any $\delta\in (0,1]$, and any $F_1,F_2\in \textup{L}^2(\R^2)$ define
\begin{equation} \label{eqn:bdelta}
B_\delta(F_1,F_2)(x,y) := \int_{\R} \big( F_1(x+t+\delta,y)-F_1(x+t,y) \big)\,
F_2(x,y+t^2) \, \zeta(x,y,t) \,\textup{d}t .
\end{equation}
We claim that, to prove
Proposition \ref{prop:main}, it suffices to prove that there exists $\gamma\in (0,1)$ such that
\begin{equation}\label{eqn:mainclaim7}
\|B_\delta(F_1,F_2)\|_{\textup{L}^1(\R^2)} \le C_{\gamma,\zeta} \delta^{\gamma} \|F_1\|_{\textup{L}^2(\R^2)} \|F_2\|_{\textup{L}^2(\R^2)}
\end{equation}
for every $\delta\in (0,1]$, for all $F_1,F_2\in \textup{L}^2(\R^2)$, where $C_{\gamma,\zeta}$ is a constant depending on $\gamma$ and $\zeta$.

This is a standard reduction, but some care needs to be taken
due to the minus sign appearing in $B_\delta(F_1,F_2)$.
By using the equality
\[\frac{1}{N} \mathbbm{1}_{(0,N]} = \sum_{k=1}^\infty 2^{-k} \frac{1}{2^{-k}N} \mathbbm{1}_{(2^{-k}N, 2^{-k+1} N]}\]
and rescaling
\[ F_j(x,y) \mapsto (2^{-k}N)^{3/2} F_j\big(2^{-k}N x,(2^{-k}N)^2 y\big), \quad \delta\mapsto(2^{-k}N)^{-1}\delta, \]
the inequality \eqref{eqn:harmL1bound} follows if we can show existence of $\gamma\in (0,1)$ such that
\begin{equation}\label{eqn:subclaim7}
\Big\| \int_1^2 \big( F_1(x+t+
\delta,y) - F_1(x+t,y) \big) F_2(x,y+t^2) \,\textup{d}t \Big\|_{\textup{L}^1_{(x,y)}(\mathbb{R}^2)} \lesssim_{\gamma} \delta^{\gamma} \|F_1\|_{\textup{L}^2(\mathbb{R}^2)} \|F_2\|_{\textup{L}^2(\mathbb{R}^2)}
\end{equation}
for all $\delta>0$. Since \eqref{eqn:subclaim7} is trivial for $\delta>1$ by the Cauchy--Schwarz inequality, we can again assume that $\delta\in(0,1]$.
Next, let $\eta$ be a smooth non-negative function supported in $[-1,1]^2$ and such that $\sum_{m\in\Z^2}\eta_m=1$, where $\eta_m(x,y) := \eta((x,y)-m)$ for all $(x,y)\in\R^2$.
The left hand side of \eqref{eqn:subclaim7} is majorized by
\[ \sum_{m\in\Z^2} \Big\| \int_1^2 \big( (\widetilde{\eta}_m F_1) (x+t+
\delta,y) - (\widetilde{\eta}_m F_1)(x+t,y) \big) (\widetilde{\eta}_m F_2)(x,y+t^2) \eta_m(x,y)\,\textup{d}t \Big\|_{\textup{L}^1_{(x,y)}(\mathbb{R}^2)},
\]
where $\widetilde{\eta}$ is a smooth non-negative function compactly supported in $[-20,20]^2$, equal to $1$ on $[-10,10]^2$ and $\widetilde{\eta}_m(x,y):=\widetilde{\eta}((x,y)-m)$.
To apply \eqref{eqn:mainclaim7} we also need to pass to a smooth cutoff function in the $t$-variable. To this end choose a smooth non-negative function $\varphi$ compactly supported in $[1,2]$ so that $\|\varphi - \mathbbm{1}_{[1,2]} \|_{\textup{L}^1(\R)} \le \delta$.
Applying \eqref{eqn:mainclaim7} with $\zeta(x,y,t)=\eta(x,y)\varphi(t)$ and majorizing the error term by the Minkowski and Cauchy--Schwarz inequalities shows that the previous display is majorized by
\[ (C_{\gamma,\zeta} \delta^{\gamma} + \delta) \sum_{m\in\Z^2} \|\widetilde{\eta}_m F_1\|_{\textup{L}^2(\R^2)} \|\widetilde{\eta}_m F_2\|_{\textup{L}^2(\R^2)}. \]
By the Cauchy--Schwarz inequality for the sum in $m$, the previous display is at most a constant multiple of $\delta^\gamma \|F_1\|_{\textup{L}^2(\R^2)} \|F_2\|_{\textup{L}^2(\R^2)}$, which proves the claim, i.e., establishes Proposition \ref{prop:main},
modulo the proof of \eqref{eqn:mainclaim7}.

\begin{proof}[Proof of \eqref{eqn:mainclaim7}]
Let $R\ge 1$ be determined later. Decompose
\[ F_1 = F_{1,R} + G_{1,R}, \]
where $F_{1,R}$ is defined via its Fourier transform as
\[ \widehat{F_{1,R}}(\xi_1,\xi_2) = \widehat{F_1}(\xi_1,\xi_2) \mathbbm{1}_{[-R,R]}(\xi_1) \]
for each $(\xi_1,\xi_2)\in\mathbb{R}^2$.
With $B_\delta$ defined by \eqref{eqn:bdelta}, split
\begin{equation}\label{eqn:thesplitting}
B_{\delta}(F_1,F_2) = B_\delta(F_{1,R}, F_2) +   B_\delta(G_{1,R}, F_2).
\end{equation}
Using Theorem \ref{thm:local} we estimate
\begin{equation}\label{eqn:auxsplitineq1}
\| B_\delta(G_{1,R}, F_2)\|_{\textup{L}^1(\R^2)}
\lesssim_{\zeta} R^{-\sigma} \|G_{1,R}\|_{\textup{L}^2(\R^2)} \|F_2\|_{\textup{L}^2(\R^2)}
\leq R^{-\sigma} \|F_1\|_{\textup{L}^2(\R^2)} \|F_2\|_{\textup{L}^2(\R^2)}
\end{equation}
with $\sigma > 0$.
It remains to control $B_{\delta}(F_{1,R}, F_2)$. 
Applying the Cauchy--Schwarz inequality in $(x,y)$ for each fixed $t$, we obtain
\[ \|B_{\delta}(F_{1,R},F_2)\|_{\textup{L}^1(\R^2)} \lesssim_{\zeta} \|F_{1,R}(x+\delta,y)-F_{1,R}(x,y)\|_{\textup{L}_{(x,y)}^2(\R^2)} \|F_2\|_{\textup{L}^2(\R^2)}. \]
The Plancherel identity gives
\[ \|F_{1,R}(x+\delta,y)-F_{1,R}(x,y)\|_{\textup{L}_{(x,y)}^2(\R^2)}^2 =\int_{[-R,R]\times\R} \big|\widehat{F_1}(\xi_1,\xi_2)\big|^2 \big|e^{2\pi i \delta \xi_1}-1\big|^2 \,\textup{d}\xi_1 \,\textup{d}\xi_2, \]
while $|\xi_1|\leq R$ implies $|e^{2\pi i \delta \xi_1}-1|\lesssim \delta R$.
Therefore,
\begin{equation}\label{eqn:auxsplitineq2}
\|B_{\delta}(F_{1,R},F_2)\|_{\textup{L}^1(\R^2)} \lesssim_{\zeta} \delta R \,\|F_1\|_{\textup{L}^2(\R^2)} \|F_2\|_{\textup{L}^2(\R^2)}.
\end{equation}
From \eqref{eqn:auxsplitineq1}, \eqref{eqn:auxsplitineq2}, and the splitting \eqref{eqn:thesplitting} we finally conclude
\[ \|B_{\delta}(F_1,F_2)\|_{\textup{L}^1(\R^2)} \lesssim_{\zeta} ( \delta R + R^{-\sigma} ) \|F_1\|_{\textup{L}^2(\R^2)} \|F_2\|_{\textup{L}^2(\R^2)}, \]
so we are done by choosing $R=\delta^{-1/2}$ and $\gamma=\min\{1/2,\sigma/2\}$.
\end{proof}

This completes the proof of Proposition~\ref{prop:main}.
\end{proof}


\section*{Acknowledgment}
The authors are grateful to Terence Tao for
raising the question answered here, and for
pointing out its connection with \cite{CDR20}.


\end{document}